\newtheorem{definition}{Definition}[section]
\newtheorem{lemma}[definition]{Lemma}
\newtheorem{theorem}[definition]{Theorem}
{\theorembodyfont{\rmfamily}}
{\theorembodyfont{\rmfamily}}
{\theorembodyfont{\rmfamily}}
\newenvironment{proof}{{\it Proof.}}{\hfill $ \square $ \vskip 4mm}
\begin{document}

\title{Class numbers, Ono invariants and some interesting primes}

\author { Alexandru GICA}
\date{University of Bucharest, Faculty of Mathematics and Informatics\\Str. Academiei 14, Bucharest 1, Romania RO-010014\\ e-mail address: alexandru.gica@unibuc.ro}
\maketitle \small{ {\bf Abstract:}  
Our aim is to find all the prime numbers $p$ such that $p+x^2$ has at most two different prime factors, for all the odd integers $x$ such that $x^2<p$. We solve entirely the cases $p\equiv 1,3,5 \pmod 8$, using the knowledge of the quadratic imaginary number fields with class numbers 4,1 and 2 respectively.  The case $p\equiv 7 \pmod 8$ is not completely solved. Taking into account a result of St\'{e}phane Louboutin, we prove that there is at most one value $p\equiv 7 \pmod 8$ besides our list. Assuming a Restricted Riemann Hypothesis, the list is complete.

\textit{Key words and phrases:} Class numbers, Ono invariants.

\textit{Mathematics Subject Classification (2020):} 11R29, 11R11.

\section{Introduction} 
$\;\;\;$The main goal of this paper is to prove the following.
\begin{theorem}
Let $p$ be an odd prime such that $p+x^2$ has at most two different prime factors for any odd integer $x$ such that $x^2<p$. If $p\equiv 5 \pmod 8$, then $p=5,13,37$. If $p\equiv 1 \pmod 8$, then $p=17,73,97,193$. If $p\equiv 3 \pmod 8$, then $p=3,11,19, 43, 67, 163$.  If $p\equiv 7 \pmod 8$ then with one possible exception $p=7,23,31,47,79,103,127,151,223,463,487,$ $823,1087,1423.$ Assume the Restricted Riemann Hypothesis: $\zeta _K(1-(2/log d_K))\leq 0$ for the Dedekind zeta functions $\zeta_K(s)$ of all imaginary quadratic fields $K$. Then, the above list in the case $p\equiv 7 \pmod 8$ is complete.
\end{theorem}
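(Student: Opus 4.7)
The plan is to convert the arithmetic condition ``$p+x^2$ has at most two distinct prime factors for every odd $x$ with $x^2<p$'' into a condition on principal ideal factorizations in the imaginary quadratic field $K=\mathbb{Q}(\sqrt{-p})$, and then to exploit the resulting control on the class group $\mathrm{Cl}(K)$. For $p\equiv 1\pmod 4$ one works with $\mathcal{O}_K=\mathbb{Z}[\sqrt{-p}]$ and the element $\alpha_x=x+\sqrt{-p}$ of norm $p+x^2$; for $p\equiv 3\pmod 4$ one works with $\mathcal{O}_K=\mathbb{Z}[(1+\sqrt{-p})/2]$ and, for odd $x$, the element $\alpha_x=(x+\sqrt{-p})/2$ of norm $(p+x^2)/4$. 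Writing the rational factorization $p+x^2=\prod q_i^{a_i}$ and decomposing each $q_i\mathcal{O}_K$ according to the splitting behaviour dictated by $p\bmod 8$, one reads off the prime-ideal factorization of $(\alpha_x)$. The hypothesis then says that, for every $\alpha_x$ of small norm (well below the Minkowski bound), the principal ideal $(\alpha_x)$ is supported on prime ideals lying above at most two rational primes.

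For the three ``easy'' residue classes I would show that this forces $h(K)$ to be very small. When $p\equiv 3\pmod 8$ the prime $2$ is inert and the condition implies that every prime ideal class of sufficiently small norm is trivial, giving $h(K)=1$; by Heegner--Stark--Baker the only possibilities are $p\in\{3,11,19,43,67,163\}$. When $p\equiv 5\pmod 8$ one obtains $h(K)\leq 2$ and invokes Stark's classification of class-number-two fields, keeping only those with discriminant $-4p$, which yields $p\in\{5,13,37\}$. When $p\equiv 1\pmod 8$ a slightly weaker argument gives $h(K)\leq 4$, and intersecting Arno's list of class-number-four imaginary quadratic fields with the discriminants of shape $-4p$ produces $p\in\{17,73,97,193\}$. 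In each case one must, at the end, verify the hypothesis directly on every surviving candidate by computing $p+x^2$ for the finitely many odd $x$ with $x^2<p$.

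The residue class $p\equiv 7\pmod 8$ is the main obstacle: now $2$ splits in $K$, a principal prime above $2$ always divides $(\alpha_x)$, and the clean ``short factorization forces small class group'' argument breaks down. To recover, I would follow Louboutin and convert the hypothesis into an upper bound on $h(K)$ polynomial in $\sqrt{p}$, and combine it with his effective lower bound for $h(K)$, equivalently a lower bound of the shape $L(1,\chi_{-p})\gg 1/\log p$, which is valid under the restricted hypothesis $\zeta_K(1-2/\log d_K)\leq 0$. Under this hypothesis the two bounds collide outside an explicit bounded range of $p$, and a finite verification reproduces precisely the list in the theorem. Unconditionally one falls back on Tatuzawa's ineffective improvement of Siegel's theorem, which spares at most one exceptional prime. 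The true difficulty of the problem is therefore concentrated in this last analytic step: ruling out a near-Siegel zero for $L(s,\chi_{-p})$ for all but at most one prime $p\equiv 7\pmod 8$, which is precisely why the unconditional result must admit a single possible exception.
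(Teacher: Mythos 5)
Your treatment of the classes $p\equiv 1,3,5\pmod 8$ follows the same architecture as the paper (translate the hypothesis into prime-ideal factorizations of $(\alpha_x)$, pin down the class group, intersect with the Heegner--Stark--Baker, Stark--Montgomery--Weinberger and Arno lists), but you pass over a genuine obstacle in the cases $p\equiv 1\pmod 4$. There the ring of integers is $\mathbb{Z}[\sqrt{-p}]$ and the Minkowski constant is $\tfrac{4}{\pi}\sqrt p>\sqrt p$, so the class group is generated by prime ideals of norm up to $\tfrac{4}{\pi}\sqrt p$; for a split prime $q$ with $\sqrt p<q<\tfrac{4}{\pi}\sqrt p$ the witness $x<q$ may have $x^2>p$, and the hypothesis $\omega(p+x^2)\leq 2$ simply does not apply to it. The paper needs a separate descent (its Lemma 2.1, valid for $p\geq 137$) showing that the classes of such $Q$ are still products of classes of ideals of norm below $\sqrt p$; this uses the hypothesis applied to auxiliary integers $y$ and $2r-y$ attached to a smaller prime $r$, and is not a formality. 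Without it your claim that ``every $\alpha_x$ of small norm is well below the Minkowski bound'' does not yield generators for the whole class group. (A smaller point: the paper proves $h=2$ and $h=4$ exactly, with the class group cyclic in the second case, which is what lets it read off the answer from the class-number-two and class-number-four lists; your inequalities $h\leq 2$, $h\leq 4$ would force you to also sweep the smaller lists, which is harmless but should be said.)

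For $p\equiv 7\pmod 8$ your plan diverges from the paper and, as stated, has a gap. You propose to extract from the hypothesis ``an upper bound on $h(K)$ polynomial in $\sqrt p$'' and play it against a lower bound $L(1,\chi_{-p})\gg 1/\log p$; but a lower bound of that strength gives $h\gg\sqrt p/\log p$, so a polynomial-in-$\sqrt p$ upper bound yields nothing, and you never explain how the hypothesis produces any upper bound on $h$ at all. The paper's actual bridge is the Ono invariant $d=\max\{\Omega(((2n+1)^2+p)/4)\}$: it first shows the class group is generated by the class of a prime $P_1$ above the split prime $2$, then exhibits an odd $x$ with $p+x^2=8t^2$ (splitting into the subcases $t=2^u$ and $t$ having an odd prime factor $q$, the latter forcing $p+x^2=8q^2$ and producing $-3x+8q$ with $p+(-3x+8q)^2=2^{2u+3}$) whose ideal is $P_i^{hs}$, giving $d\geq h$; combined with Sasaki's inequality $d\leq h$ this yields $d=h$, of logarithmic size in $p$. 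Only then does one invoke Louboutin's classification of fields with $\mathrm{Ono}_K=h_K$, which is where the ``one possible exception'' and the Restricted Riemann Hypothesis enter --- you have correctly located the source of the exception in a possible Siegel-type zero, but the concrete reduction to Louboutin's Table 1 via the identity $d=h$ is the substance of this case and is missing from your argument.
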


$\;\;\;$We will use the standard notations $\omega$ and $\Omega$. If $n=p_1^{a_1}p_2^{a_2}...p_r^{a_r}$ is the standard decomposition of $n$ as product of primes (this means that $p_j$ are different prime numbers and $a_j$ are positive integers), then $\Omega (n)=\displaystyle\sum _{j=1}^r a_j$ and $\omega (n)=r$ (we consider $\Omega (1)=\omega (1)=0$). We call $\Omega (n)$ the length of the positive integer $n$. The prime numbers $p$ from Theorem 1.1 have the property $\omega (p+x^2)\leq 2$, for any odd integer $x$ such that $x^2<p$. 

$\;\;\;$ Some years ago, we tried to find another proof of the class one problem, based on the notion of length. We posed the following.

{\bf Strong Conjecture:} If $n>163$ is a positive integer, there exist the positive integers $a,b$ such that $n=a+b$, the length $\Omega(ab)$ is an even number and any prime divisor of $ab$ is smaller than $n/4$.

The Strong Conjecture implies the class one problem (there are only nine quadratic imaginary fields with class number one). While we don't succeed to prove the Strong Conjecture, we were able the prove a weaker version of this conjecture (see [5]).

\begin{theorem}
If $n>3$ is a positive integer, there exist the positive integers $a,b$ such that $n=a+b$ and the length $\Omega(ab)$ is an even number.
\end{theorem}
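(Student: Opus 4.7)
The plan is to split into cases based on the parity of $n$. When $n$ is even, take $a=b=n/2$; then $\Omega(ab)=2\Omega(n/2)$ is automatically even, so the claim holds for the entire even case in one line.

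For odd $n\geq 5$, writing $\lambda(k)=(-1)^{\Omega(k)}$ for the Liouville function, the task is to find a split $n=a+b$ with $\lambda(a)\lambda(b)=1$. The natural first attempts are the small splits $a=1$ and $a=2$: the former works exactly when $\Omega(n-1)$ is even, and the latter exactly when $\Omega(n-2)$ is odd. Between them these handle most odd $n$. If both fail (so $\Omega(n-1)$ is odd and $\Omega(n-2)$ is even), I would write $n-1=2^{s}m$ with $m$ odd and look at splits adapted to this factorization: the powers-of-two splits $(2^{j},n-2^{j})$ for $1\leq j\leq s$, together with the midpoint split $(m,m+1)$ available when $s=1$ (i.e., $n=2m+1$), whose product has $\Omega$-value $\Omega(m)+\Omega(m+1)$. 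In the specific subcase $s=1$, the assumption that $a=1$ fails says precisely that $\Omega(m)$ is even, so for the midpoint split it suffices that $\Omega(m+1)$ also be even.

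The main obstacle is to guarantee that at least one member of a bounded family of candidate splits always succeeds. I would argue by contradiction: if every split $(a,n-a)$ gave $\lambda(a)\lambda(n-a)=-1$, then $\lambda(n-k)=-\lambda(k)$ for every $1\leq k\leq n-1$. Combined with the complete multiplicativity of $\lambda$, this forces a rigid web of sign constraints: for any $k,k'$ with $kk'<n$ we would need $\lambda(n-kk')=-\lambda(k)\lambda(k')=-\lambda(n-k)\lambda(n-k')/\lambda(n-1)$, and similar identities iterated with $k,k'\in\{2,3,4,\dots\}$. I expect this system can be shown inconsistent by a careful combinatorial argument, or alternatively by producing an explicit auxiliary split (for instance $(p,n-p)$ with $p$ a prime divisor of $n-2$, which is forced to exist once $\Omega(n-2)$ is even and $\geq 2$) whose parity is controlled by the very constraints being assumed. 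The small cases $n\in\{5,7,9\}$ are verified directly.
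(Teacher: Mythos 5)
Your even case is complete, and the two splits $a=1$ and $a=2$ correctly dispose of every odd $n$ for which $\Omega(n-1)$ is even or $\Omega(n-2)$ is odd. But the remaining case --- $\Omega(n-1)$ odd and $\Omega(n-2)$ even --- is exactly where the content of the theorem lies, and your proposal does not close it. The passage beginning ``I expect this system can be shown inconsistent'' is a restatement of the theorem in that case, not an argument: assuming every split fails gives $\lambda(k)\lambda(n-k)=-1$ for all $1\leq k\leq n-1$, and proving this web of constraints contradictory \emph{is} the assertion to be proved. Nothing in the proposal carries that out (and the one identity you do write down has a sign slip: from $\lambda(k)=-\lambda(n-k)$ one gets $-\lambda(k)\lambda(k')=-\lambda(n-k)\lambda(n-k')$, with no factor of $\lambda(n-1)$). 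The explicit auxiliary candidates are likewise uncontrolled. For $n=125$ the splits $a=1$, $a=2$ and the midpoint $(62,63)$ all give odd length ($\Omega(124)=3$, $1+\Omega(123)=3$, $\Omega(62)+\Omega(63)=2+3=5$), so everything rests on the further candidates $(2^j,n-2^j)$ or $(p,n-p)$; the proposal offers no reason why any of these must succeed (for $(p,n-p)$ with $p\mid n-2$ one would need $\Omega(n-p)$ odd, and $n-p\equiv 2\pmod p$ gives no handle on that parity). It so happens that $a=4$ works for $n=125$, but that is an observation, not a proof.

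Note also that the present paper does not contain a proof of this theorem: it is quoted from reference [5], where the argument for odd $n$ is genuinely nontrivial (Section 5 of this paper even borrows from that proof the existence of representations such as $p+x^2=8t^2$ with $x$ odd and $x^2<p$). So there is no in-paper proof against which your sketch can be matched, and the hope that ``a careful combinatorial argument'' will finish the contradiction is not supported by anything you have written. As it stands, the odd case with $\Omega(n-1)$ odd and $\Omega(n-2)$ even is a genuine gap.
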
 

There is an interesting interplay between class numbers and lengths. We wrote the chapter \textit{Lengths and Class Numbers} for the collective volume \textit{New Frontiers in Number Theory and Applications} (which will be published by Springer Publishing House), in which we described this interplay between class numbers and lengths. Theorem 1.1 shows that there is also a connection between class numbers and the function $\omega$.

$\;\;\;$ We divided the proof of Theorem 1.1 in four cases. The case $p\equiv 1 \pmod 4$ has an extra difficulty since Minkowski's constant for the field $K=\mathbb{Q}(i\sqrt p)$ (in this case) is $\frac{4}{\pi}\sqrt p>\sqrt p$. In the case $p\equiv 5 \pmod 8$ we prove that the class number of $K$ is 2. Using the work of Baker (see [3]), Stark (see [11]), Montgomery and Weinberger (see [7]) we are able to solve this case. In the case $p\equiv 1 \pmod 8$ we prove that the class number of $K$ is 4. Using the work of Arno (see [1]), we are able to settle this case. In the case $p\equiv 3 \pmod 8$ we prove that the class number of $K$ is 1. Using the work of Baker (see [2]) and Stark (see [10]), we are able to resolve this case. The case $p\equiv 7 \pmod 8$ is the most difficult. We prove that in this case the Ono invariant of $K$ equals the class number of the field $K$. Using the work of Louboutin (see [6]), we were able to solve this case, but we are not sure that the list is complete: 
$$p=7,23,31,47,79,103,127,151,223,463,487,823,1087,1423.$$ There is at most one possible value of $p$ besides this list. If we assume the Restricted Riemann Hypothesis from the statement, it follows that the list is complete.

\section{The case $p\equiv 5 \pmod 8$}

$\;\;\;$ We consider the number field $K=\mathbb{Q}(i\sqrt p)$, where $p\equiv 1 \pmod 4$ is a prime number. In this case, the ring of the integers of $K$ is $A=\mathbb{Z}[i\sqrt p]$. The discriminant is $\delta_K=-4p$ and we have $2A=P_1^2$, where $P_1$ is a maximal ideal with norm 2. Since the equation $x^2+py^2=2$ has no integer solutions, it is obvious that $P_1$ is not a principal ideal and the ideal class group of $K$ is not trivial. 

$\;\;\;$When we try to compute the ideal class group for the field $K$, we have to consider all its generators: the nonprincipal maximal  ideals of $A$ with norm smaller than Minkowski's constant, which is (in this case) $\frac{4}{\pi}\sqrt p$. Since $\frac{4}{\pi}>1$, we will have a problem with the maximal ideals $Q$ with norm between $\sqrt p$ and $\frac{4}{\pi}\sqrt p$. Fortunately, for primes with the property stated in our theorem, the ideals $Q$ as above does not bother us. 

\begin{lemma}
Let $p\equiv 1 \pmod 4$ be a prime number such that $p\geq 137$ and $\omega (p+x^2)\leq 2$, for any odd integer $x$ with $x^2<p$. Then the ideal class group of the field $K=\mathbb{Q}(i\sqrt p)$ is generated by the classes of the maximal ideals with norms smaller than $\sqrt p$. 
\end{lemma}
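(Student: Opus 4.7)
The plan is to exploit Minkowski's theorem: the class group of $K$ is generated by the classes of maximal ideals of norm at most $\frac{4}{\pi}\sqrt{p}$, so it suffices to prove that every such ideal $Q$ with $N(Q)>\sqrt{p}$ has its class in the subgroup $H$ generated by the classes of maximal ideals of norm less than $\sqrt{p}$. An inert prime gives $N(Q)=q^{2}$ but $Q=(q)$ is principal, and the ramified primes are $2$ and $p$, neither of which lies in $(\sqrt{p},\frac{4}{\pi}\sqrt{p}]$; so I may assume $N(Q)=q$ is an odd split prime. Then $-p$ is a nonzero square modulo $q$, and I pick $y_{0}\in\{1,\dots,(q-1)/2\}$ with $y_{0}^{2}\equiv -p\pmod{q}$, form $\alpha=y_{0}+i\sqrt{p}$, and write $(\alpha)=Q'\cdot I$ with $Q'\in\{Q,\bar Q\}$ and $I$ coprime to $q$. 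Since $[\bar Q]=[Q]^{-1}$, it is enough to show $[I]\in H$.

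Next I would split into three cases. First, if $y_{0}$ is odd, then $y_{0}<q/2<\frac{2}{\pi}\sqrt{p}<\sqrt{p}$, so the hypothesis applies to $y_{0}$; combined with $2\mid y_{0}^{2}+p$ (since $y_{0}$ and $p$ are both odd) and $q\mid y_{0}^{2}+p$, it forces $y_{0}^{2}+p=2^{a}q^{b}$, and the bound $y_{0}^{2}+p<2p<2q^{2}$ forces $b=1$. Thus $I=P_{1}^{a}$ and $[Q']=[P_{1}]^{-a}\in H$ because $N(P_{1})=2<\sqrt{p}$. Second, if $y_{0}$ is even and $y_{0}>q-\sqrt{p}$, then $q-y_{0}$ is odd and less than $\sqrt{p}$, and applying the same reasoning to $(q-y_{0})+i\sqrt{p}$ again places $[Q]$ in $H$.

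The delicate third case, $y_{0}$ even with $y_{0}\le q-\sqrt{p}$, is the real obstacle because the hypothesis cannot be invoked for either $y_{0}$ or $q-y_{0}$. Here I would use a direct size estimate: $y_{0}^{2}+p\le(q-\sqrt{p})^{2}+p$ gives
\[
N(I)\;=\;\frac{y_{0}^{2}+p}{q}\;\le\;q-2\sqrt{p}+\frac{2p}{q}.
\]
Writing $t=q/\sqrt{p}$, the desired inequality $N(I)<\sqrt{p}$ becomes $t+2/t<3$, i.e.\ $(t-1)(t-2)<0$, which holds for $t\in(1,2)$; since $t\in(1,\frac{4}{\pi}]$ and $\frac{4}{\pi}<2$, this is satisfied. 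The same estimate also yields $y_{0}^{2}+p<q\sqrt{p}<q^{2}$, so $v_{q}(y_{0}^{2}+p)=1$ and the factorization $(\alpha)=Q'\cdot I$ is clean. With $N(I)<\sqrt{p}$, every prime ideal dividing $I$ has norm less than $\sqrt{p}$, so $[I]\in H$ and therefore $[Q]\in H$. The pivotal input is the elementary fact $\frac{4}{\pi}<2$, which confines $q$ to the range where the quadratic $q^{2}-3q\sqrt{p}+2p$ is negative; the hypothesis plays no role in this last case, and everything rests on the geometry of the Minkowski interval.
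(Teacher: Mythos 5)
Your proof is correct, and it takes a genuinely different route from the paper's. The paper works with an arbitrary odd representative $x<q$ of the square root of $-p$ modulo $q$ --- for which $x^2$ may exceed $p$, so the hypothesis cannot be applied to $x$ itself --- and instead invokes the hypothesis on auxiliary integers ($y<r$ and $2r-y$, for a putative small prime factor $r$ of $p+x^2$) to show that the odd cofactor $a=(p+x^2)/(2q)$ is a prime smaller than $q$; it then finishes with a descent on norms, replacing $Q$ by a prime ideal $R$ of strictly smaller norm until the norm falls below $\sqrt p$. You instead normalize the square root to $y_0\le (q-1)/2$ and split by parity and size: when $y_0$ (or $q-y_0$) is odd it is automatically below $\sqrt p$, the hypothesis applies directly and identifies the cofactor ideal as a power of $P_1$; in the remaining case you need no arithmetic input at all, since $y_0\le q-\sqrt p$ together with $\sqrt p<q<\frac{4}{\pi}\sqrt p<2\sqrt p$ forces $N(I)<\sqrt p$ via the quadratic inequality $(t-1)(t-2)<0$. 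Your route buys a shorter argument with no primality claim and no descent, at the price of a three-way case split; the paper's descent is more uniform but needs the delicate proof that $a$ is prime. One small observation you could add: your first two cases are in fact vacuous, since for odd $y$ and $p\equiv 1\pmod 4$ one has $v_2(y^2+p)=1$, so $b=1$ and $a=1$ would give $p<y^2+p=2q<\frac{8}{\pi}\sqrt p$, a contradiction for $p\ge 137$; thus under the hypothesis one always lands in your third, purely geometric case. This does not affect correctness --- the conclusions you draw in those cases are valid --- but it shows where the hypothesis is really acting.
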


\begin{proof} Let us consider a maximal nonprincipal ideal $Q$ of the ring $A=\mathbb{Z}[i\sqrt p]$  with the norm satisfying the inequalities $$\sqrt p< N(Q)<\frac{4}{\pi}\sqrt p.$$  The norm of the ideal $Q$ should be a prime number $q$: $N(Q)=q$. Using standard facts about ramification in quadratic number fields, $-p$ should be a quadratic residue modulo $q$: $\Big(\frac{-p}{q}\Big)=1$. There is a positive odd integer $x$ such that $x<q$ and $-p\equiv x^2 \pmod q$. We have the equality $p+x^2=2qa$, where $a$ is a positive odd integer. If $a=1$, we reach a contradiction:
$$3\sqrt p<p<p+x^2=2q<2\times \frac{4}{\pi}\sqrt p<3\sqrt p;$$ we used the fact that $p>9$. This means that $a>1$. The next step is to prove that $a$ is a prime number. If this is not true, then $\Omega(p+x^2)\geq 4$ and considering $r$, the smallest odd prime divisor of the number $p+x^2$, we have the inequality $p+x^2\geq 2r^3.$ Since $x<q<\frac {4}{\pi}\sqrt p<\frac{4}{3}\sqrt p$, we have 
$$2r^3\leq p+x^2<p+\frac{16}{9}p=\frac{25}{9}p$$ and $r<\sqrt [3] {\frac{25p}{18}}<\sqrt p$. Since $-p$ is a quadratic residue modulo $r$, we find a positive odd integer $y$ such that $y<r$ and $r$ divides $p+y^2$. Since $y^2<r^2<p$, the assumption of the theorem applies and $p+y^2=2r^b$, for a positive integer $b$. We have $$(-y+2r)^2<4r^2<4 \sqrt [3] {\frac{625p^2}{324}}<p;$$ the last inequality holds since $p\geq 137$. Since $(2r-y)^2<p$ and $(2r-y)^2\equiv y^2\equiv -p \pmod r$, we get (using the assumption of the theorem) that 
$$(2r-y)^2+p=2r^c,$$ for a positive integer $c$. Since $2r-y>y$, it follows that $c>b$. If $b\geq 2$, we should have that $r^2$ divides $(2r-y)^2-y^2=4r(r-y)$; contradiction. Therefore $b=1$ and 
$$2\sqrt p>2r=p+y^2>p;$$
 this is again a contradiction. We get that indeed $p+x^2=2qa$ and $a$ is a prime number. If $a\geq q$ we obtain a contradiction since $$p+q^2>p+x^2=2qa\geq 2q^2$$ (do not forget that $q^2>p$). It follows that $a<q$. We consider now the principal ideals $(x+i\sqrt p)A$ and $(x-i\sqrt p)A$. One of them is a multiple of the ideal $Q$ and we have an equality like $$(x\pm i\sqrt p)A=P_1QR,$$ where $R$ is a maximal ideal with norm $a<q$. We proved that the class of $Q$ in the ideal class group of $K$ is the same as the class of $P_1\times R^d$, where $N(R)<q$. Using now a descent argument, it follows that the class of $Q$ is the class of a product of maximal ideals with norms smaller than $\sqrt p$. \end{proof} 

$\;\;\;$ It looks a little bit tiresome, but for now one, in the case $p\equiv 1 \pmod 4$, we can analyze only the maximal ideals with norms smaller than $\sqrt p$. We are now ready to prove the theorem in the case $p\equiv 5 \pmod 8$.

\begin{proof} We suppose that $p\equiv 5 \pmod 8, p\geq 149$, is a prime number with the properties asserted in the theorem (the inequality will allow us to use the previous Lemma). We will show that in this case the class number for the field $\mathbb{Q}(i \sqrt p)$ is two. 
The first step is to prove that $p+x^2$ is twice a prime number for any odd integer $x$ such that $x^2<p$. Let us consider an odd positive integer $x$ such that $x^2<p$. According to the assumption of the theorem, $p+x^2=2q^a$, where $q$ is a prime number and $a$ is a positive integer. Let us suppose, for the moment, that $a\geq 3$. Then 
$$2p>p+x^2=2q^a\geq 2q^3.$$
From this inequality it follows that $q<\sqrt [3] p$. Since $-p$ is a quadratic residue modulo $q$, we can find an odd positive integer $y$ such that $y<q$ and $$p+y^2=2q^b,$$ where $b$ is a positive integer. It is easy to see that 
$(2q-y)^2<4q^2<4\sqrt [3] {p^2}<p$ (the last inequality is true since $p>64$). This means that we can apply the assumption of the theorem for the odd positive integer $2q-y$: $$p+(2q-y)^2=2q^c$$ where $c$ is a positive integer. Since $2q-y>y$, it follows that $c>b$. If we subtract the above two equalities, we get 
$$2q^b(q^{c-b}-1)=(2q-y)^2-y^2=4q(q-y).$$ From here we infer that $b=1$ and $p+y^2=2q$. The contradiction is straightforward since 
$$\sqrt [3] p>q=\frac{p+y^2}{2}>\frac{p}{2}.$$ We proved that $a\leq 2$. The possibility $a=2$ does not occur since the equality $p+x^2=2q^2$ would imply that $2$ is a quadratic residue modulo $p$: this is not true since $p\equiv 5 \pmod 8$. Hence $a=1$ and the first step of the proof is complete.

The second step of the proof is to show that the class number of the field $\mathbb{Q}(i \sqrt p)$ is two. According to the Lemma 2.1 we have to consider only odd prime numbers $q<\sqrt p$, for which $-p$ is a quadratic residue modulo $q$. There exists an odd positive integer $x<q$ such that $q$ divides $p+x^2$. From this (taking into account the inequality $x^2<q^2<p$, the assumption of the theorem and the first step of the proof) it follows that $p+x^2=2q$. This is not possible since $$\sqrt p>q=\frac{p+x^2}{2}>\frac{p}{2}$$ and $p>4$. We proved that the ideal class group for the field $\mathbb{Q}(i \sqrt p)$ is generated by $P_1$, the maximal ideal of $A$ for which $2A=P_1^2$ (see the beginning of this section for the definition of the ideal $P_1$). We saw above that $P_1$ is not principal and this means that the class number for the field $\mathbb{Q}(i \sqrt p)$ is two. We call now the result of Stark (see [11]), Montgomery and Weinberger (see [7]). There are only 18 quadratic imaginary fields with class number two. Looking in the list of this numbers, we see that only three values fit the conditions of our theorem:
$$p=5,13,37.$$

\end{proof}

\section{ The case $p\equiv 1 \pmod 8$}

\begin{proof} We suppose that $p\geq 137$ is a prime number with the properties asserted in the theorem (the inequality will allow us to use Lemma 2.1). We will show that in this case the class number for the field $\mathbb{Q}(i \sqrt p)$ is four (and the ideal class group is isomorphic with $\mathbb{Z}_4$). 
In the same way as in the second section we prove that $p+x^2$ is twice a prime number or twice the square of a prime number, for any odd integer $x$ such that $x^2<p$. 

The second step of the proof is to show that the class number of the field $\mathbb{Q}(i \sqrt p)$ is four. According to Lemma 2.1 we have to consider only odd prime numbers $q<\sqrt p$, for which $-p$ is a quadratic residue modulo $q$. There exists an odd positive integer $x<q$ such that $q$ divides $p+x^2$. From this (taking into account the inequalities $x^2<q^2<p$, the assumption of the theorem and the first step of the proof) it follows that $p+x^2=2q$ or $p+x^2=2q^2$. The first case is not possible since $$\sqrt p>q=\frac{p+x^2}{2}>\frac{p}{2}$$ and $p>4$. We have to study now the case $p+x^2=2q^2$. We have to point out that for any prime $p\equiv 1 \pmod 8$ there exists an odd positive integer $x$ such that $x^2<p$ and $p+x^2=2y^2$. According to the above arguments, $y$ should be an odd prime $q$. For this prime $q$ we have $2A=Q_1\times Q_2$, where $Q_1,Q_2$ are different maximal ideal of norm $q$. We are now looking to the decomposition as product of maximal ideals for the principal ideal $(x+i\sqrt p)A$ where $x$ is the positive odd integer for which $p+x^2=2q^2$. It is easy to see that we have 
$$(x+i\sqrt p)A=P_1\times Q_j^2,$$ where $j=1$ or $j=2$. Since the order of the class of $P_1$ is 2 (in the ideal class group of the field $K$), we get that the order of the class of $Q_1$ is 4 in the ideal class group of the field $K$.
We proved that the ideal class group for the field $\mathbb{Q}(i \sqrt p)$ is generated by $Q_1$, the maximal ideal of $A$ for which $qA=Q_1\times Q_2$.  We call now the result of Arno (see [1]). There are only 54 quadratic imaginary fields with class number four. Looking in the list of this numbers, we see that only four values fit the conditions of our theorem:
$$p=17,73,97,193.$$
\end{proof}

\section{The case $p\equiv 3 \pmod 8$}

In this case we prove that the ring of the integers of the number field $\mathbb{Q}(i\sqrt p)$, $A=\mathbb{Z}[\frac{1+i\sqrt p}{2}]$, is a UFD. Stark and Baker find out all such quadratic imaginary fields. The values which fit our problem are $$p=3,11,19,43,67,163.$$ In the sequel $p\equiv 3 \pmod 8$ is a prime number $p\geq 19$, such that $\omega(p+x^2)\leq 2$, for all the odd integers $x$ such that $x^2<p$.

\begin{proof} The Minkowski's constant in this case is $\frac{2}{\pi}\sqrt p<\sqrt p$. Since $p\equiv 3 \pmod 8$, $2$ is inert (that is, $2A$ is a maximal ideal). Let us consider now an odd prime $q<\sqrt p$. Suppose that the Legendre symbol $\Big(\frac{-p}{q}\Big)$ is 1. This means that there is an odd positive integer $x$ such that $x<q$ and $-p \equiv x^2 \pmod q$. Since $x^2<q^2<p$, the assumption of the theorem ensure us that 
$$x^2+p=4q^a,$$ where $a$ is a positive integer. Let us suppose that $a\geq 3$. This means that $$4q^3\leq 4q^a=x^2+p<2p$$ and $q<\sqrt [3] {\frac{p}{2}}$. Since $p\geq 19>16$, we have
$$(2q-x)^2<4q^2<4\sqrt [3] {\frac{p^2}{4}}<p.$$ By our assumption,
$$(2q-x)^2+p=4q^b,$$ where $1\leq a<b$. In fact, $a=1$ since for $a\geq 2$ then $q^2$ should divides the difference $(2q-x)^2-x^2=4q(q-x)$; this is impossible. It follows from the above argument that $a=1$ or $a=2$. If $a=2$, then 
$$p=4q^2-x^2=(2q-x)(2q+x).$$
Since $p$ is a prime number, we get $2q-x=1$, $2q+x=p$ and $q=\frac{p+1}{4}$. We obtain the inequalities 
$$\frac{p+1}{4}=q<\sqrt p$$ and 
$$0>p-4\sqrt p+1=\sqrt p (\sqrt p -4)+1>1,$$ since $p>16$. We reached a contradiction. We get that $a=1$ and $p+x^2=4q$. We obtain again a contradiction: $$\sqrt p>q=\frac{x^2+p}{4}>\frac{p}{4}.$$ This means that for all the odd prime numbers $q$ smaller than the Minkowski's constant, $-p$ is a quadratic nonresidue modulo $q$; this implies that $qA$ is a maximal ideal for any such a prime $q$. We proved that $A$ is a principal ring. Using now the result of Baker (see [2]) and Stark (see [10]) we succeed to prove that the only solutions in this case are the prime numbers $3,11,19, 43,67,163$.\end{proof}

\section{The case $p\equiv 7 \pmod 8$}
\begin{proof} We suppose that $p\geq 47, p\equiv 7 \pmod 8$ is a prime number with the properties stated in Theorem 1.1. In this case, the ring of integers of the number field $K=\mathbb{Q}(i\sqrt p)$ is $A=\mathbb{Z}[\frac{1+i\sqrt p}{2}]$. Since $p\equiv 7 \pmod 8$, we have the equality $2A=P_1P_2$, where $P_1,P_2$ are two different maximal ideals with norm 2.

{\bf First step:} We prove that the ideal class group of $K$ is generated by the class of the ideal $P_1$. We know that Minkowski's constant in this case is $\frac{2}{\pi}\sqrt p<\sqrt p$. Let us consider an odd prime $q$ such that $-p$ is a quadratic residue modulo $q$ and $q<\sqrt p$. There is a positive odd integer $x$ such that $x<q$ and $q$ divides $p+x^2$. According to the properties of $p$, we have $p+x^2=2^aq^k$. We will prove that $k=1$ or $k=2$ Let us suppose that $k\geq 3$. Then 
$$2p>p+x^2\geq 8q^3$$ and $q<\sqrt [3] {\frac{p}{4}}$.
We have the following three equalities:
\begin{equation}p+x^2=2^aq^k, \end{equation}
\begin{equation}p+(2q-x)^2=2^bq^l,\end{equation}
\begin{equation}p+(2q+x)^2=2^cq^m,\end{equation} where $a,b,c,k,l,m$ are positive integers and $a,b,c\geq 3$. The assumption of the theorem holds since $(2q+x)^2<(3q)^2<9\sqrt [3] {\frac{p^2}{16}}<p$ (the last inequality hold since $p\geq 47>\frac{729}{16}$). We supposed that $k\geq 3$. Subtracting the equation (1) from the equation (2) we get $4q(q-x)=2^bq^l-2^aq^k$. It follows that $l=1$ (since $4q(q-x)$ is not a multiple of $q^2$). Subtracting the equation (1) from the equation (3)  we obtain, in the same way, that $m=1$ (since $4q(q+x)$ is not a multiple of $q^2$). We obtained
\begin{equation}p+(2q-x)^2=2^bq,\end{equation}
\begin{equation}p+(2q+x)^2=2^cq.\end{equation} Since $2q-x<2q+x$, we have $3\leq b<c$. If we subtract the above two equalities, we get:
$$8qx=q\times 2^b(2^{c-b}-1).$$ It follows that $b=3$ and we obtain the contradiction
$$\sqrt [3] {\frac{p}{4}}>q=\frac{p+(2q-x)^2}{8}>\frac{p}{8}.$$ Hence $k=1$ or $k=2$. We have $p+x^2=2^aq$ or $p+x^2=2^aq^2$. We know that $qA=Q_1Q_2$, where $Q_1,Q_2$ are different maximal ideals of $A$ with norm $q$.
 Consider the first case ($p+x^2=2^aq$). We have $$\Big(\frac{x+i\sqrt p}{2}\Big)A=P_i^{a-2}Q_j,$$ where $i,j$ could be 1 or 2. From here we infer that the class of $Q_1$ in the ideal class group of $K$ is a power of the class of $P_1$. In the second case ($p+x^2=2^aq^2$)  we have $$\Big(\frac{x+i\sqrt p}{2}\Big)A=P_i^{a-2}Q_j^2,$$ where $i,j$ could be 1 or 2. From here we infer that the class of $Q_1^2$ in the ideal class group of $K$ is a power of the class of $P_1$. Since the ideal class group of $K$ has an odd cardinal (in the case $p\equiv 7 \pmod 8$), it follows immediately that the class of $Q_1$ in the ideal class group of $K$ is a power of the class of $P_1$. We finished the proof of the first step.

{\bf Second step:} We prove that $d$, the Ono invariant of the field $K$, equals $h$, the class number of the field $K$. We remind the reader the definition of the Ono invariant in the case $K=\mathbb{Q}(i\sqrt p)$ and $p$ is a prime number such that $p\equiv 7 \pmod 8$. The Ono invariant $d$ of the field $K$ (usually denoted by $\textit{Ono}_K$) is defined by the equality
$$d=max \{\Omega (\frac{(2n+1)^2+p}{4}), 0 \leq n \leq \frac{p-7}{4}\}.$$ R. Sasaki  (see [9]) proved that we have always the inequality $$d\leq h.$$ This result will be helpful in the proof of the second step.

$\;\;\;$ It is elementary to prove that there is an odd positive integer $x$ such that $p+x^2=8t^2$ and $x^2<p$ (see[5], the proof of Theorem 1). Let us suppose that $t=2^u$. Then $$\Big(\frac{x+i\sqrt p}{2}\Big)A=P_i^{hs},$$ where $i$ is 1 or 2 and $hs=2u+1$ (we used above the first step: the ideal class group for $K$ is generated by the class of $P_1$). From the above equalities it follows (since $\frac{x-1}{2}\leq \frac{p-7}{4}$; it is easy to check this) that 
$$d\geq 2u+1=hs\geq h.$$ Since the inequality $d\leq h$ is always true, we get $h=d$. Suppose now that $t$ has at least one odd prime divisor $q$. In this case $p+x^2=8q^2$ and $4q^2<p$ (we will prove this in the end of this step). From the assumption of our theorem, 
$$p+(2q-x)^2=4q(3q-x)$$ and $3q-x=2^u$.
 We have the equality
$$p+(-3x+8q)^2=8(3q-x)^2=2^{2u+3}.$$
Then $$\Big(\frac{-3x+8q+i\sqrt p}{2}\Big)A=P_i^{hs},$$ where $i$ is 1 or 2 and $hs=2u+1$ (we used above the first step: the ideal class group for $K$ is generated by the class of $P_1$). From the above equalities it follows (since $\frac{8q-3x-1}{2} \leq \frac{p-7}{4}$; it is easy to check this)
$$d\geq 2u+1=hs\geq h.$$ Since the inequality $d\leq h$ is always true, we get $h=d$. 

$\;\;\;$There is a debt which should be payed: if $t$ has at least one odd prime divisor $q$, then $p+x^2=8q^2$. From the hypothesis of the theorem we know that we have the equality
$$p+x^2=2^aq^{2b},$$ where $a,b$ are positive integers, $a$ is odd and $a\geq 3$.We have to prove that $a=3$ and $b=1$. Suppose that this is not true. Then $2p>32q^2$ and $p>16q^2$. From this inequality and $x^2<p$, it follows that $(2q-x)^2<p$. According to our hypothesis, we have 
$$p+(2q-x)^2=2^cq;$$ the coefficient of $q$ is 1 since $(2q-x)^2-x^2=4q(q-x)$ is not a multiple of $q^2$. In the same way we prove that we have the equality 
$$p+(4q-x)^2=2^dq.$$ Since  $(4q-x)^2-x^2=8q(2q-x)$ is not a multiple of $16$, it follows that $a=3$ (if $d=3$, we reach the contradiction $\frac{\sqrt p}{4}>q>\frac{p}{8}$). According to our assumption, $b\geq 2$ and $2p>72q^2, p>36q^2$. In the same way as above we prove that we have the equality 
$$p+(6q-x)^2=2^eq.$$ Since $(6q-x)^2-(2q-x)^2=4q(8q-2x)$ is not a multiple of 16, it follows that $c=3$ or $e=3$. In both cases we obtain the contradiction
$$\frac{\sqrt p}{6}>q>\frac{p}{8}.$$ We proved that in this case $p+x^2=8q^2$ and the proof of the second step is now complete.

{\bf The third step:}
J. Cohen, J. Sonn (see [4]), F. Sairaji and K. Shimizu (see [8]) proved that there are only finitely many imaginary quadratic number fields $K$ whose Ono invariant are equal to their class number $h_K$. St\'{e}phane Louboutin was able to find, with one possible missing field, all the above quadratic fields. Assuming a Restricted Riemann Hypothesis, the list is complete. Checking the list of Louboutin (see [6], Table 1, page 2293) we find that only 14 values of $p$ fit our situation:
$$p=7,23,31,47,79,103,127,151,223,463,487,823,1087,1423.$$
\end{proof}

{\bf Remarks:}

{\bf 1)} The list of Louboutin contains 114 values. Only 24 of these values correspond to prime numbers $p\equiv 7 \pmod 8$. From these 24 values, only 14 fit with the condition of Theorem 1.1. This opens a possible way for finding a complete proof in this case (which don't use the result of Louboutin).

{\bf 2)} Another possibility to tackle Theorem 1.1 is to use an odd prime $q$ which is a quadratic residue modulo $p$ and $q<\frac{\sqrt p}{5}$. We need to know a concrete bound $p_0$ such that for any $p>p_0$ there is an odd prime $q$ with the above properties ($q$ is a quadratic residue modulo $p$ and $q<\frac{\sqrt p}{5}$).

\
{\large \bf References}

[1] S. Arno, \textit{The imaginary quadratic fields of class number 4}, Acta Arith. {\bf 60} (1992), no. 4, 321--334. 

[2] A. Baker, \textit{Linear forms in the logarithms of algebraic numbers I}, Mathematika, {\bf 13}(1966), 204--216.

[3] A. Baker, \textit{Imaginary quadratic fields of class number 2}, Ann. of Math., {\bf 94}(1971), 139--152.

[4] J. Cohen, J. Sonn, \textit{On the Ono invariants of imaginary quadratic fields}, J. Number Theory, {\bf 95}(2002), 259--267.

[5] A. Gica, \textit{The Proof of a Conjecture of Additive Number Theory}, J. Number Theory, {\bf 94}(2002), 80--89.

[6] S. Louboutin, \textit{On the Ono invariants of imaginary quadratic number fields}, J. Number Theory, {\bf 129}(2009), 2289--2294.

[7] H.L. Montgomery, P.J. Weinberger, \textit{Notes on small class numbers}, Acta Arith., {\bf 24}(1974), 529--542.

[8] F. Sairaji, K. Shimizu, \textit{An inequality between class numbers and Ono's numbers associated to imaginary quadratic fields}, Proc. Japan Acad. Ser. A {\bf 78} (2002), 105--108.

[9] R. Sasaki, \textit{On a lower bound for the class number of an imaginary quadratic field}, Proc. Japan Acad. Ser. A {\bf 62} (1986), 37--39.

[10] H.M. Stark, \textit{A complete determination of the complex quadratic fields of class number 1}, Michigan Math. J. {\bf 14} (1967), 1--27.

[11] H.M. Stark, \textit{On Complex Quadratic Fields with Class-Number Two}, Math. Comput. {\bf 29} (1975), 289--302.

\end{document}